\DeclareMathOperator{\im}{im}
\DeclareMathOperator{\rk}{rk}
\DeclareMathOperator{\surj}{surj}
\DeclareMathOperator{\bij}{bij}
\DeclareMathOperator{\nil}{nil}
\DeclareMathOperator{\End}{End}
\DeclareMathOperator{\id}{id}
\DeclareMathOperator{\GL}{GL}
\DeclareMathOperator{\Gr}{Gr}
\DeclareMathOperator{\Hom}{Hom}
\DeclareMathOperator{\Span}{span}
\theoremstyle{plain}
  \newtheorem{theorem}{Theorem}
  \newtheorem{lemma}[theorem]{Lemma}
\theoremstyle{definition}
  \newtheorem{definition}[theorem]{Definition}
\theoremstyle{remark}
\numberwithin{theorem}{section}  
\numberwithin{equation}{section}
\begin{document}
\bibliographystyle{plain}     

\title{Counting Semilinear Endomorphisms Over Finite Fields}
\author{Tim Holland}
\date{\today}
\subjclass[2010]{15A04 (15A03 15A33)}
\thanks{The author is grateful to David Zureick-Brown for 
sharing his ideas and to Bryden Cais and Jeremy Booher for supervising his 2011 PROMYS research project.
}

\maketitle

\section{Introduction}\label{intro} 

Fix a prime $p$ and finite field $k$ of order $q:=p^r$.  
For a field automorphism 
$\tau$ of $k$ and a $k$-vector space $V$ of dimension $g$,  we will write $\End^{\tau}(V)$
for the set of {\em $\tau$-semilinear endomorphisms of $V$}; that is, additive maps
$F:V\rightarrow V$ which satisfy $F(\alpha v) = \tau(\alpha)\cdot F(v)$
for all $v\in V$ and $\alpha\in k$. 
As $\tau$ is an automorphism, the kernel and image
(as sets, say) of any $\tau$-semilinear map $F$ are both $k$-subspaces of $V$, 
and this gives a well-defined notion of rank and nullity.
Moreover, one has a canonical direct sum decomposition $V \simeq V^{F-\bij}\oplus V^{F-\nil}$
where $V^{F-\bij}$ is the maximal subspace of $V$ on which $F$ is bijective,
so we may speak of the ``infinity rank" of $F$, which by definition is $\rk_{\infty}(F):=\dim_k(V^{F-\bij})$.
For any pair of nonnegative integers $r,s$ satisfying $s \le r\le g$, we may thus define the set
\begin{equation}
	P_{r,s}^{\tau}:=\{F\in \End^{\tau}(V)\ :\ \rk(F) = r\ \text{and}\ \rk_{\infty}(F)=s\}.\label{PrsDef}
\end{equation}
These sets show up naturally in the classification of finite flat $p$-power order group
schemes over $k$ which are killed by $p$ and which have $p$-rank $s$, via
Dieudonn\'e theory.  It is therefore natural to ask for a closed formula (in $q=\#k$)
for the cardinality of $P^{\tau}_{r,s}$, and the main result of this paper
is precisely such a formula:
\begin{theorem}\label{main}
	Let $g$ be a positive integer and $r,s$ nonnegative integers with $s\le r\le g$.
	For any fixed automorphism $\tau$ of $k$ and any $g$-dimensional vector space $V$,
	the number of $\tau$-semilinear endomorphisms of $V$ with rank $r$ and infinity rank $s$ is
	\begin{equation*}
		\#P_{r,s}^{\tau} = 
		\frac{q^{g^2}}{q^{(g-r)^2+r-s}}\frac{ \prod\limits_{j=1}^g (1-q^{-j})  \prod\limits_{j=g-r}^{g-s-1} (1-q^{-j})}	
		{\prod\limits_{j=1}^{r-s} (1-q^{-j}) \prod\limits_{j=1}^{g-r}(1-q^{-j})}
	\end{equation*}
\end{theorem}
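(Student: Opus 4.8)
The plan is to reduce the problem to counting \emph{nilpotent} $\tau$-semilinear endomorphisms of a given rank, by means of the canonical decomposition $V = V^{F-\bij}\oplus V^{F-\nil}$ recalled above, and then to pin down that nilpotent count via a short telescoping recursion. The underlying observation is that once a basis is fixed, a $\tau$-semilinear endomorphism of an $m$-dimensional space is recorded by an $m\times m$ matrix over $k$, with rank equal to the matrix rank and bijectivity equal to invertibility of the matrix; twisting the entries by $\tau$ merely permutes the set of matrices and so is invisible to all of the counts that occur. Thus semilinearity plays essentially no role after the reduction step --- and, as a by-product, the answer will turn out to be independent of $\tau$.

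\emph{Step 1.} Set $A := V^{F-\bij}$ and $B := V^{F-\nil}$. Then $V = A\oplus B$ is $F$-stable with $F|_A$ bijective and $F|_B$ nilpotent, so the image of $F$ decomposes as $F(A)\oplus F(B)$ and $\rk F = \dim A + \rk(F|_B) = s + \rk(F|_B)$. By uniqueness of the decomposition, $F\mapsto(A,B,F|_A,F|_B)$ is a bijection from $P_{r,s}^\tau$ onto the set of quadruples $(A,B,G,N)$ with $V = A\oplus B$, $\dim A = s$, $G\in\End^\tau(A)$ bijective and $N\in\End^\tau(B)$ nilpotent of rank $r-s$, the inverse being $(A,B,G,N)\mapsto G\oplus N$. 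There are $\#\GL_s(k)$ bijective $G$'s, and since $\GL(V)$ acts transitively on ordered splittings $V = A\oplus B$ with $\dim A = s$, with point stabiliser $\GL_s(k)\times\GL_{g-s}(k)$, there are $\#\GL_g(k)/(\#\GL_s(k)\,\#\GL_{g-s}(k))$ such splittings. Multiplying out,
\[
  \#P_{r,s}^\tau \;=\; \frac{\#\GL_g(k)}{\#\GL_{g-s}(k)}\,M_{g-s,\,r-s},\qquad
  M_{n,t}\;:=\;\#\{\,N\in\End^\tau(k^n)\,:\,N\text{ nilpotent},\ \rk N = t\,\}.
\]

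\emph{Step 2.} Running Step 1 on an $n$-dimensional space and stratifying the rank-$t$ endomorphisms by their infinity rank $s'$ --- which lies in $\{0,1,\dots,t\}$, since the stable image sits inside the image --- those of infinity rank exactly $s'$ number $\#\GL_n(k)\,M_{n-s',\,t-s'}/\#\GL_{n-s'}(k)$. On the other hand, the rank-$t$ $\tau$-semilinear endomorphisms of $k^n$ are precisely the $n\times n$ matrices over $k$ of rank $t$, of which there are $R_{n,t} := \binom{n}{t}_q\prod_{i=0}^{t-1}(q^n-q^i)$. Hence, for all $0\le t\le n$,
\[
  \frac{R_{n,t}}{\#\GL_n(k)} \;=\; \sum_{s'=0}^{t}\frac{M_{n-s',\,t-s'}}{\#\GL_{n-s'}(k)}.
\]
Reindexing by $m := n-s'$, the right-hand side becomes $\sum_{m=n-t}^{n}M_{m,\,m-(n-t)}/\#\GL_m(k)$, which depends on $(n,t)$ only through $n$ and the corank $n-t$; writing the same identity for $(n-1,t-1)$ and subtracting, everything cancels except the top term, so
\[
  \frac{M_{n,t}}{\#\GL_n(k)} \;=\; \frac{R_{n,t}}{\#\GL_n(k)} - \frac{R_{n-1,\,t-1}}{\#\GL_{n-1}(k)},\qquad\text{i.e.}\qquad M_{n,t} \;=\; R_{n,t} - q^{n-1}(q^n-1)\,R_{n-1,\,t-1},
\]
using $\#\GL_n(k) = q^{n-1}(q^n-1)\,\#\GL_{n-1}(k)$.

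\emph{Step 3, and the obstacle.} It then remains to substitute the closed form for $M_{g-s,\,r-s}$ into the formula of Step 1, write each $\#\GL_m(k)$ as $q^{m^2}\prod_{j=1}^m(1-q^{-j})$ and each $R$ via its definition, and cancel the resulting product of $q$-factors; this should collapse to the asserted expression (its leading power $q^{g^2}$ being visible already from $\#\End^\tau(V) = q^{g^2}$). I do not anticipate a single conceptual difficulty: the proof is an orchestration of standard ingredients. Two points need care. First, one must genuinely verify in Step 1 that $F\mapsto(A,B,F|_A,F|_B)$ is a bijection --- this is exactly where the uniqueness of the decomposition $V^{F-\bij}\oplus V^{F-\nil}$ is used --- together with additivity of rank along an $F$-stable direct sum. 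Second, and this is the part I expect to be the most laborious and error-prone, one must carry out the final $q$-algebra of Step 3 and check that it really reproduces the stated formula.
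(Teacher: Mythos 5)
Your proof is correct, but it follows a genuinely different route from the paper's. The paper fixes a basis $\mathbf{e}$ of $V$ once and for all, attaches to each $F$ the flag of iterated images $V\supsetneq F(V)\supsetneq F^2(V)\supsetneq\cdots$, canonically adapts $\mathbf{e}$ to this flag via the Schubert-cell construction of Lemma~\ref{adapt}, and proves (Lemma~\ref{mubij}) that $F\mapsto(F(v_{F,1}),\ldots,F(v_{F,g}))$ is a bijection onto an explicit set $X_{r,s}$ of $g$-tuples of vectors cut out by span conditions, which is then counted directly; this generalizes Crabb's argument for nilpotent matrices. You instead peel off the bijective part via the Fitting decomposition $V=V^{F-\bij}\oplus V^{F-\nil}$, reducing to the number $M_{n,t}$ of nilpotent rank-$t$ semilinear endomorphisms of an $n$-dimensional space, and then determine $M_{n,t}$ by feeding your Step-1 formula back into the stratification of all rank-$t$ endomorphisms by infinity rank and telescoping. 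Both arguments ultimately dispose of the semilinearity the same way (rank-$t$ semilinear maps biject with rank-$t$ matrices, and you are rightly careful to use the matrix only for rank and bijectivity, never for nilpotency, since the matrix of $F^2$ is $M\tau(M)$ rather than $M^2$). Your telescoping is valid provided the base case $t=0$, where $M_{n,0}=1$, is treated separately (the subtracted identity needs $t\ge1$), and the $q$-algebra you deferred in Step 3 does work out: with $n=g-s$ and $t=r-s$ one finds
\[
 M_{n,t}\;=\;q^{2nt-t^{2}-t}\,\frac{(1-q^{-n})\bigl(1-q^{-(n-t)}\bigr)\Bigl(\prod_{j=n-t+1}^{n-1}(1-q^{-j})\Bigr)^{2}}{\prod_{j=1}^{t}(1-q^{-j})},
\]
and multiplying by $\#\GL_g(k)/\#\GL_{g-s}(k)=q^{g^2-n^2}\prod_{j=1}^{g}(1-q^{-j})\big/\prod_{j=1}^{n}(1-q^{-j})$ reproduces the theorem's formula (in particular the exponent $g^2-(g-r)^2-(r-s)$). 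What the paper's route buys is an explicit parametrization of $P_{r,s}^{\tau}$ itself by lists of vectors; what yours buys is brevity and independence from the adapted-basis machinery, at the price of invoking the Fitting decomposition for semilinear maps and a small recursion.
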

Here, we follow the usual convention that a product indexed by the empty set takes
the value 1.
Note that $P_{g,g}^{\id}$ is identified with $\GL_g(k)$ upon choosing a basis of $V$,
while the union of $P_{r,0}^{\id}$ for $0\le r\le g$ is, upon choosing a basis of $V$, 
the set of nilpotent $g\times g$ matrices with entries in $k$, so our formula
may be used to recover the well-known formulae for the order of $\GL_g(k)$
and for the number of nilpotent $g\times g$ matrices over $k$ (for which, see \cite{Crabb}).
In fact, our argument is a natural generalization of the proof of the main result
of \cite{Crabb}, though some care is required in our method to deal with the issue of semilinearity.
We remark that Theorem \ref{main} provides key input for one of the main results of
\cite{CEZ}, and indeed this was the genesis of the present note.

\section{Flags and adapted bases}

In this section, we summarize the concepts and tools from semilinear algebra that will
figure in the proof of Theorem \ref{main}.  We keep the notation of \S\ref{intro};
in particular, a $g$-dimensional $k$-vector 
space $V$. 

We begin by noting that for an automorphism $\tau$ of $k$, the set $\End^{\tau}(V)$
is naturally a $k$-vector space, and that for $F\in \End^{\tau}(V)$
and $F'\in \End^{\tau'}(V)$, the composition $F\circ F'$ lies in $\End^{\tau\circ\tau'}(V)$.
For $F\in \End^{\tau}(V)$, one checks that the subsets
$\ker(F)$ and $\im(F)$, defined in the usual way, are actually $k$-linear subspaces of $V$,
and we set $\rk(F):=\dim_k(\im(F))$.  
By definition, the {\em terminal image} of $F$ is the subspace
\begin{equation*}
	V^{F-\bij}:= \bigcap_{n\ge 0} \im(F^n),
\end{equation*}
and we define the infinity rank of $F$ to be the dimension of its terminal
image: $\rk_{\infty}(F):=\dim_k V^{F-\bij}$.  An easy argument shows that
in fact $\rk_{\infty}(F)=\rk(F^g)$, and that $F$ is bijective on $V^{F-\bij}$,
which justifies the notation.  In fact, $V^{F-\bij}$ is the maximal $F$-stable
subspace of $V$ on which $F$ is bijective \cite{}.

\begin{definition}
	Let $r$ be a nonnegative integer and
	\begin{equation}
		V=V_0\supsetneq V_1\supsetneq \cdots \supsetneq V_{r-1} \supsetneq V_r=0
			\label{flag}
	\end{equation}
	be a flag in $V$. Set $d_i= \dim V_i$.
	We say that an ordered basis $\{v_1,v_2,\ldots, v_g\}$ of $V$ is {\em adapted}
	to the given flag (\ref{flag}) if $\{v_{g-d_i+1},\ldots, v_n\}$ is a basis 
	of $V_i$ for all $i$.
\end{definition}

Given a flag in $V$ and a fixed ordered basis $\mathbf{e}:=\{e_i\}_{1\le i\le g}$
of $V$, there is a canonical procedure from the theory of Schubert cells
which associates to $\mathbf{e}$ an adapted basis
of the given flag, which we now explain.  

First, suppose that $U$ is an arbitrary 
subspace of $V$, and for $1\le j\le g$ define
\begin{equation*}
	U_j:= U \cap \Span\{e_{j+1},e_{j+2},\ldots, e_g\},
\end{equation*}
with the convention that $U_g=0$.  We then have descending chain of subspaces
\begin{equation*}
	U=U_0 \supseteq U_1 \supseteq \cdots \supseteq U_g=0
\end{equation*}
with the property that $\dim(U_{j-1})-\dim(U_{j})\le 1$
and equality holds if and only if $U_{j-1} \neq U_{j}$.  We define
\begin{equation*}
	J:=\{j\ :\ U_{j-1} \neq U_{j}\}
\end{equation*}
and put $m:=\#J$; note that this integer is equal to the dimension of $U$.

\begin{lemma}\label{adapt}
	For each $j\in J$, there is a unique vector $u_j\in U_{j-1}$ with 
	\begin{equation*}
		u_j-e_j\in \Span\{e_i\ :\ i>j,\ i\not\in J\}.\label{adaptcondition}
	\end{equation*}
	Moreover, $\{u_j\ :\ j\in J\}$ is a basis of $U$, and if
	the last $n$-vectors of $\mathbf{e}$
	lie in $U$ for some $n\le m$, then $u_j=e_j$
	for all $j\in J$ with $j \ge g-n+1$.
\end{lemma}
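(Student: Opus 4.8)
The plan is to recognize this as a row-reduction (Schubert-cell) statement: none of the semilinear structure is relevant here, since everything takes place inside the fixed vector space $V$ equipped with the fixed ordered basis $\mathbf{e}$, so one argues with plain linear algebra. The key structural observation is that for any $u \in U_{i-1}$, the condition $u - e_i \in \Span\{e_l : l > i,\ l \notin J\}$ forces the coordinate vector of $u$ to be supported on $\{i\} \cup \{l > i : l \notin J\}$ with $e_i$-coordinate $1$; this echelon shape is what drives the whole argument.

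\textbf{Existence.} I would construct the $u_j$ by downward induction on $j \in J$. For the largest element $j_0 \in J$, every index exceeding $j_0$ lies outside $J$, so any $w \in U_{j_0-1} \setminus U_{j_0}$ (which exists precisely because $j_0 \in J$), rescaled to have $e_{j_0}$-coordinate $1$, already satisfies the required condition. For the inductive step at $j \in J$, pick $w \in U_{j-1} \setminus U_j$ with $e_j$-coordinate $1$, let $c_i$ denote its $e_i$-coordinate for each $i \in J$ with $i > j$, and set $u_j := w - \sum_{i \in J,\ i > j} c_i u_i$ using the $u_i$ already constructed. By the echelon shape of each $u_i$, subtracting $c_i u_i$ clears the $e_i$-coordinate while disturbing neither the $e_j$-coordinate nor any other coordinate indexed by $J$; and since $u_i \in U_{i-1} \subseteq U_{j-1}$ for $i > j$, the vector $u_j$ still lies in $U_{j-1}$. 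I expect this bookkeeping --- checking that the Gaussian-elimination corrections do not reintroduce coordinates along $J$ --- to be the only genuinely fiddly point in the proof.

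\textbf{Uniqueness.} If $u_j$ and $u_j'$ both meet the conditions, their difference $d$ lies in $U \cap \Span\{e_i : i > j,\ i \notin J\}$. If $d \neq 0$, the least index $l$ at which $d$ has a nonzero coordinate satisfies $l \notin J$ by the support condition; yet $d \in U \cap \Span\{e_l, \dots, e_g\} = U_{l-1}$ while $d \notin U_l$, forcing $U_{l-1} \neq U_l$, i.e.\ $l \in J$ --- a contradiction. Hence $d = 0$.

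\textbf{Basis and the last assertion.} Linear independence of $\{u_j : j \in J\}$ follows by reading off leading coordinates: in a relation $\sum_{j \in J} a_j u_j = 0$, if $j_0$ is the least $j$ with $a_j \neq 0$, then the $e_{j_0}$-coordinate of the sum equals $a_{j_0}$ (only $u_{j_0}$ contributes, since $u_j$ for $j > j_0$ is supported on indices $\geq j$), so $a_{j_0} = 0$, a contradiction. As $\#J = \dim U$ by the remark preceding the lemma, these vectors form a basis of $U$. Finally, if $e_{g-n+1}, \dots, e_g \in U$, then $\Span\{e_j, \dots, e_g\} \subseteq U$ for every $j \geq g-n+1$, so $U_{j-1} = \Span\{e_j, \dots, e_g\}$ and $U_j = \Span\{e_{j+1}, \dots, e_g\}$; in particular such $j$ lies in $J$, and $e_j$ itself belongs to $U_{j-1}$ with $e_j - e_j = 0 \in \Span\{e_i : i > j,\ i \notin J\}$. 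By the uniqueness just established, $u_j = e_j$.
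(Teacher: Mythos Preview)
Your proof is correct and follows essentially the same route as the paper: both construct the $u_j$ by downward induction on $j\in J$, clearing the $J$-coordinates by subtracting off the previously built $u_i$ (a Schubert-cell row reduction). Your uniqueness argument, via the leading-coordinate contradiction, is in fact crisper than the paper's, which only argues that the construction is independent of the auxiliary choices $v_{j_i}$; likewise your explicit checks of linear independence and of the final claim spell out what the paper leaves as ``follows immediately from our construction.''
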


\begin{proof}
	We list the $m$ elements of $J$ in increasing order $j_1<j_2<\cdots <j_m$. 
	By definition of $J$, the complement of $U_{j_i}$ in $U_{j_i-1}$ is 1-dimensional
	for $1\le i\le m$, so we may pick a nonzero vector $v_{j_i}$ in this complement which spans it.
	Then $v_{j_m}$ is a linear combination of $\{e_{i}\ :\ i \ge j_m\}$     
	with a nonzero coefficient of $e_{j_m}$ by construction.  We may therefore uniquely
	scale $v_{j_m}$ by the inverse of this coefficient to obtain a vector $u_{j_m}\in U_{j_m}$
	which satisfies (\ref{adaptcondition}).
	Now suppose inductively that vectors $u_{j_m},u_{j_{m-1}},\ldots, u_{j_{d+1}}$ satisfying the
	condition of the Lemma have been uniquely determined.  We may uniquely write our choice $v_{j_d}$
	as
	\begin{equation*}
		v_{j_d} =  c_0 e_{j_d} + \sum_{i=1}^{g-j_d} c_i e_{j_d+i}
	\end{equation*}
	with $c_0$ necessarily nonzero.  We then define
	\begin{equation*}
		u_{j_d} = c_0^{-1} v_{j_d} - \sum_{\substack{j_d+i\in J\\ 1\le i\le g-j_d}} c_0^{-1}c_i e_{j_d+i}
	\end{equation*}
	One checks that $u_{j_d}$ satisfies (\ref{adaptcondition}).
	Multiplying our choice $v_{j_d}$ by any nonzero scalar gives the same
	vector $u_{j_d}$; in particular, the $u_{j_i}$ are independent
	of our initial choices of the $v_{j_i}$ and so are uniquely determined.
	That the set $\{u_j\ :\ j\in J\}$ is a basis of $U$ follows immediately from our construction,
	as does the fact that $u_j=e_j$ for $j\ge g-n+1$ when the last $n$ vectors of
	the ordered basis $\mathbf{e}$ lie in $U$.
\end{proof}

For a fixed ordered basis $\mathbf{e}=\{e_i\}$
of $V$ and a subspace $U$ of $V$, the procedure of Lemma \ref{adapt}
yields, in a canonical way, a new ordered basis 
$\{e_j\ :\ j\not\in J\}\cup \{u_j\ :\ j\in J\}$ of $V$ with the property that the final $m$
vectors are a basis of $U$.  We will say that this process {\em adapts} the basis $\mathbf{e}$
to the subspace $U$.  Note that the process of adapting an ordered basis to $U$ does not change the
final $n$ vectors when these vectors lie in $U$.  

Given a flag (\ref{flag}) in $V$ and a fixed ordered basis $\mathbf{e}$
of $V$, we now associate a canonical adapted basis $\mathbf{v}$ as follows.
First, we adapt $\mathbf{e}_r:=\mathbf{e}$ to $V_{r-1}$ to obtain a new ordered basis
$\mathbf{e}_{r-1}$ of $V$ with the property that the last $d_{r-1}$ vectors are a basis
of $V_{r-1}$.  We then adapt $\mathbf{e}_{r-1}$ to $V_{r-2}$ to obtain a new ordered
basis of $V$ in which the last $d_{r-2}$ vectors are a basis of $V_{r-2}$
and the last $d_{r-1}$ vectors are a basis of $V_{r-1}$ (as $V_{r-1}\subseteq V_{r-2}$
so the last $d_{r-1}$ vectors of $\mathbf{e}_{r-2}$ and $\mathbf{e}_{r-1}$
coincide as we have noted).  We continue in this manner, until we arrive
at the adapted basis $\mathbf{v}:=\mathbf{e}_1$; by the unicity of Lemma \ref{adapt},
this $\mathbf{v}$ is uniquely determined by the flag \ref{flag} and the fixed ordered basis $\mathbf{e}$
of $V$.

\section{Proof of Theorem \ref{main}}

Our proof of Theorem \ref{main} will proceed in two steps.
First, using flags and adapted bases,
we will show that the set $P_{r,s}^{\tau}$ defined by (\ref{PrsDef})
is in bijection with a certain set consisting of lists of vectors;
we will then count this latter set.

\begin{definition}\label{Xdef}
	For $r,s$ nonnegative integers with $s\le r\le g$, we define $X_{r,s}$ to be the
	subset of $V^g$ consisting of all $g$-tuples $(x_1,x_2,\ldots, x_g)$ which satisfy:
	\begin{enumerate}
		\item $\dim \Span\{x_j\}_{j=1}^g = r$
		\item $\dim \Span\{x_j\}_{j=g-s+1}^g = s$
		\item $x_{g-s}\in \Span \{x_j\}_{j=g-s+1}^g$.\label{third} 
	\end{enumerate}
\end{definition}

Now let $\tau$ be any automorphism of $k$ and fix, once and for all, a choice $\mathbf{e}$ of $k$-basis of $V$.
Any $F\in P_{r,s}^{\tau}$ determines a flag in $V$ via $V_i:=F^i(V)$, and this flag
necessarily has the form
\begin{equation}
	V= V_0 \supsetneq V_1\supsetneq V_2 \supsetneq \cdots \supsetneq V_t =V_{t+1} = V_{t+2}\cdots\label{Fflag}
\end{equation}
with $V_1=\im(F)$ of dimension $r$ and $V_t$ of dimension $s$ (as it is the terminal image of $F$).
Adapting $\mathbf{e}$ to (\ref{Fflag}) as in \S\ref{adapt} uniquely determines an ordered basis 
$\mathbf{v}_F:=\{v_{F,i}\}_{i=1}^g$
of $V$ (that depends on $F$), and we define a map of sets
\begin{equation}
		\xymatrix{
			{\mu:P_{r,s}^{\tau}}\ar[r] & {V^g}
			}  
			\qquad\text{by}\qquad
			\mu(F):=(F(v_{F,1}),F(v_{F,2}),\ldots, F(v_{F,g})).\label{mumap}
\end{equation}

The following Lemma is key:

\begin{lemma}\label{mubij}
	The map $\mu$ of $(\ref{mumap})$ is a bijection onto $X_{r,s}$.
\end{lemma}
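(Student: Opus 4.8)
\textbf{Proof strategy for Lemma \ref{mubij}.}

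The plan is to establish the bijection by explicitly constructing the inverse map. First I would verify that $\mu$ actually lands in $X_{r,s}$: given $F \in P_{r,s}^{\tau}$ with its canonical adapted basis $\mathbf{v}_F$, the vectors $F(v_{F,1}),\ldots,F(v_{F,g})$ span $\im(F)$, which has dimension $r$, giving condition (1). For conditions (2) and (3), I would use the structure of the flag \eqref{Fflag}: the last $d_t = s$ vectors of $\mathbf{v}_F$ form a basis of $V_t = V^{F-\bij}$, on which $F$ is bijective, so their images again span an $s$-dimensional space, giving (2); and the vector $v_{F,g-s}$ is the ``first'' basis vector associated to the step $V_{t-1} \supsetneq V_t$ (using $d_{t-1} = s+1$, which holds because the flag strictly decreases until it stabilizes at dimension $s$), so $F(v_{F,g-s}) \in F(V_{t-1}) = V_t = \Span\{F(v_{F,j})\}_{j=g-s+1}^g$, which is (3). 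Here I am using that the dimensions drop by exactly one near the bottom of the flag — this needs a short justification from the ``easy argument'' that $\rk_\infty(F) = \rk(F^g)$ together with the fact that once two consecutive images agree the chain is constant.

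For surjectivity and injectivity simultaneously, I would construct an inverse. Given $(x_1,\ldots,x_g) \in X_{r,s}$, I want to recover $F$. The idea is that $F$ must send the canonical adapted basis (of the flag it induces) to $(x_1,\ldots,x_g)$, so I should reverse-engineer both the basis and the map. Set $W := \Span\{x_j\}_{j=1}^g$, so $\dim W = r$; this will be forced to equal $\im(F) = V_1$. Now adapt the fixed basis $\mathbf{e}$ to $W$ via the canonical procedure of \S\ref{adapt}, obtaining an ordered basis whose last $r$ vectors span $W$. Continuing, the terminal image should be $W' := \Span\{x_j\}_{j=g-s+1}^g$ of dimension $s$ (by condition (2)), and condition (3) guarantees that $x_{g-s} \in W'$ so that the next space up in the flag, namely $F(W) = \Span\{x_j\}_{j=g-s+1}^g$ restricted appropriately, has the right dimension to make the flag strictly decreasing down to $W'$ and then constant. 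From the $x_j$ one reconstructs the entire flag $V_i = F^i(V)$ recursively as spans of appropriate tails of the tuple, then re-runs the adaptation procedure of the paragraph before \S\ref{main} to get a candidate adapted basis $\mathbf{v}$, and finally \emph{defines} $F$ by $\tau$-semilinear extension of $v_i \mapsto x_i$. One then checks that $F$ so defined has rank $r$, infinity rank $s$, and that its own canonically adapted basis is exactly $\mathbf{v}$ — the last point being where the unicity clause of Lemma \ref{adapt} does the work.

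The main obstacle I anticipate is the \emph{consistency check} in the inverse construction: one must verify that the flag recovered from $(x_1,\ldots,x_g)$ via the recursion $V_i = \Span$ of suitable tails genuinely reproduces $F^i(V)$ for the reconstructed $F$, so that the adapted basis built from this recovered flag coincides with the basis $\mathbf{v}$ used to define $F$. Concretely, one needs: (a) that the adaptation process applied to $\mathbf{e}$ relative to the recovered flag is ``compatible'' with how $F$ acts, i.e. the images $F(v_i)$ recover the tails we started with; and (b) that no two distinct tuples yield the same $F$ and no $F$ is missed. The subtlety from $\tau$-semilinearity — flagged in the introduction — enters precisely here: the span conditions in Definition \ref{Xdef} are conditions on $k$-subspaces, and these are preserved by $\tau$-semilinear maps since $\tau$ is an automorphism, so the dimension bookkeeping goes through, but one must be careful that ``$F$ extends $\tau$-semilinearly from a basis'' is well-defined and that the rank/infinity-rank of the resulting $F$ match $r$ and $s$ rather than some smaller values. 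I would organize the argument so that conditions (1)–(3) of Definition \ref{Xdef} are shown to be \emph{exactly} the constraints needed for the recovered flag to have shape \eqref{Fflag} with $\dim V_1 = r$ and terminal dimension $s$, making the bijection transparent once the adaptation machinery of \S\ref{adapt} is invoked.
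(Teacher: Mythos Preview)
Your approach is essentially the paper's: construct the inverse by recursively recovering the flag from tails of the tuple, adapt $\mathbf{e}$ to that flag, define $F$ by $\tau$-semilinear extension of $v_i\mapsto x_i$, and then check inductively that $F^i(V)=V_i$ so that the adapted basis used to define $F$ coincides with $\mathbf{v}_F$. One small correction: your claim that $d_{t-1}=s+1$ is false in general (the dimension drops in \eqref{Fflag} need not be by one), but you only need $d_{t-1}\ge s+1$, which does follow from the strict inclusion $V_{t-1}\supsetneq V_t$, and that already gives $v_{F,g-s}\in V_{t-1}$ and hence condition~(3).
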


\begin{proof}
	It is clear from our construction that $\mu$ has image contained in $X_{r,s}$, so it suffices
	to construct an inverse mapping which we do as follows.
	Given an arbitrary element $x:=(x_i)_{i=1}^g$ of $X_{r,s}$, we set $V_0:=V$ and $d_0:=g$
	and for $i\ge 1$ inductively construct a flag in $V$ by defining
	\begin{equation}
		V_i:=\Span\{x_{g-d_{i-1},\ldots,x_g}\}\qquad\text{and}\qquad d_i:=\dim(V_i).\label{inductflag}
	\end{equation}
	Letting $\mathbf{v}_{x}=\{v_{x,i}\}_{i=1}^g$ be the adaptation of the basis $\mathbf{e}$ to this flag,
	we define $F_x\in \End^{\tau}(V)$ to be the unique $\tau$-semilinear endomorphism 
	of $V$ satisfying $F_x(v_{x,i})=x_i$ for all $i$.  That is, for arbitrary $v\in V$, we write $v=\sum c_i v_{x,i}$
	as a unique linear combination of the basis vectors $v_{x,i}$ and we define
	$F_x(v):=\sum \tau(c_i)x_i$, which is visibly a $\tau$-semilinear endomorphism of $V$.   
	We claim that $V_i=F_x^i(V)$ for all $i$.  For $i=0$ this is simply the definition of $V_0=V$.
	Inductively supposing that $F_x^i(V)=V_{i}$
	for some $i\ge 0$, we then have
	\begin{align*}
		F_x^{i+1}(V) = F_x(F_x^i(V)) = F_x(V_i) &= F_x(\Span\{v_{x,j}\}_{j=g-d_i+1}^g) \\
		&=\Span\{F_x(v_{x,j})\}_{j=g-d_i+1}^g \\
		&= \Span\{x_{j}\}_{j=g-d_i+1}^g \\ 
		&= V_{i+1},
	\end{align*}
	where in the final equality on the first line we have use the fact that the last $d_i$
	vectors in the adapted basis $\mathbf{v}_x$ span $V_{i}$ by construction.
	We conclude from the definition of $X_{r,s}$ that $F_x$ lies in $P_{r,s}^{\tau}$,
	and we define $\nu:X_{r,s}\rightarrow P_{r,s}(V)$ to be the map of sets which sends $x$ to $F_x$.  
	It is a then straightforward exercise
	to check that $\nu$ and $\mu$ are inverse mappings of sets.
\end{proof}

We now wish to enumerate the set $X_{r,s}$.  We remark that the set $X_{r,s}$
is independent of $\tau$, so that the semilinearity aspect of our
counting problem has been entirely removed at this point.
The $s$ vectors $x_{g-s+1},\ldots,x_g$ of $V$ must be linearly independent, but otherwise
may be chosen arbitrarily from $V$; in particular, there are 
\begin{equation}
		\prod_{i=0}^{s-1} (q^g - q^i)\label{Lastscount}
\end{equation}
ways to do this.  Supposing that $x_{g-s+1},\ldots, x_g$ have been chosen, 
we write $V_{\infty}$ for their span and we put $n:=g-s$ and $d:=r-s$. 
A choice $x_1,\ldots, x_n$ of $n$-vectors in $V$ will have the property
that the $g$-vectors $x_1,\ldots, x_g$ span an $r$-dimensional subspace
of $V$ if and only if the images of $x_1,\ldots,x_n$ span a $d$-dimensional
subspace of $W:=V/V_{\infty}$.  The condition (\ref{third}) in the definition
of $X_{r,s}$ (Definition \ref{Xdef}) that $x_{g-s}$ lie in $V_{\infty}$
is of course equivalent to the condition that its image in $W$ be zero.
We are therefore reduced to computing the cardinality of the set
\begin{equation*}
	Q_{n,d}:=\{(w_i)_{i=1}^{n-1}\in W^{n-1}\ :\ \dim\Span\{w_i\}_{i=1}^{n-1} = d\}. 
\end{equation*}
As usual, we write $\Gr(W,d)$ for the Grassmannian of $d$-dimensional subspaces 
of $W$ and for any $k$-vector space $U$, we denote by $\Hom_k^{\surj}(k^{n-1},U)$ 
the set of $k$-linear surjective homomorphisms from $k^{n-1}$ onto $U$.
We then define the set
\begin{equation*}
	A_{n,d}:=\{(U,T)\ :\ U\in \Gr(W,d)\ \text{and}\ T\in \Hom_k^{\surj}(k^{n-1},U)\}
\end{equation*}
as well as a map of sets
\begin{equation}
	\xymatrix{
		{\gamma:Q_{n,d}} \ar[r] & A_{n,d}
	}\qquad\text{by}\qquad \gamma((w_i)_{i=1}^{n-1}):=
	(\Span\{w_i\}_{i=1}^{n-1},\ T: \sum c_if_i \mapsto \sum c_iw_i)\label{gammamap}
\end{equation}
where $\{f_i\}_{i=1}^{n-1}$ is the standard basis of $k^{n-1}$.

\begin{lemma}\label{gammabij}
	The map $\gamma$ of $(\ref{gammamap})$ is bijective.
\end{lemma}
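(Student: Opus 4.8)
The plan is to prove bijectivity by exhibiting an explicit two-sided inverse. Define a map $\delta\colon A_{n,d}\to Q_{n,d}$ by sending a pair $(U,T)$ to the tuple $(T(f_1),T(f_2),\ldots,T(f_{n-1}))$, where $\{f_i\}_{i=1}^{n-1}$ is the standard basis of $k^{n-1}$. First I would check that $\delta$ is well defined, i.e. that its image really lies in $Q_{n,d}$: since $U\subseteq W$ the tuple lies in $W^{n-1}$, and since $T$ is surjective onto $U$ we have $\Span\{T(f_i)\}_{i=1}^{n-1}=\im(T)=U$, which is $d$-dimensional by the definition of $A_{n,d}$, so the tuple indeed satisfies the defining condition of $Q_{n,d}$. (The analogous well-definedness statement for $\gamma$ was already recorded just before the lemma: the span of the $w_i$ is $d$-dimensional, hence an element of $\Gr(W,d)$, and $T$ maps onto it.)

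Next I would verify that the two composites are the identity maps. For $\delta\circ\gamma$: given $(w_i)\in Q_{n,d}$, applying $\gamma$ produces the pair $(\Span\{w_i\},T)$ with $T(f_i)=w_i$ by construction, and then $\delta$ returns $(T(f_i))_i=(w_i)_i$. For $\gamma\circ\delta$: given $(U,T)\in A_{n,d}$, applying $\delta$ yields $w_i:=T(f_i)$, and then $\gamma$ returns the pair whose first coordinate is $\Span\{w_i\}=\im(T)=U$ (again using surjectivity) and whose second coordinate is the linear map $\sum c_if_i\mapsto\sum c_iw_i=\sum c_iT(f_i)=T(\sum c_if_i)$, which equals $T$ since two $k$-linear maps agreeing on a basis coincide. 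Hence $\delta\circ\gamma=\id_{Q_{n,d}}$ and $\gamma\circ\delta=\id_{A_{n,d}}$, so $\gamma$ is a bijection.

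I do not anticipate a genuine obstacle: the content is entirely formal, amounting to the fact that a $k$-linear map out of $k^{n-1}$ is the same data as its list of values on the standard basis, combined with the identification of ``$T$ surjective onto its image'' with the span condition cutting out $Q_{n,d}$. The only point deserving a moment's care is keeping the dimension bookkeeping consistent on both sides — that the subspace attached to a tuple in $Q_{n,d}$ is exactly the target $U$ of the associated surjection — which is precisely what the surjectivity requirement in the definition of $A_{n,d}$ guarantees.
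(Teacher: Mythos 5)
Your proposal is correct and matches the paper's argument exactly: the paper also defines $\delta(U,T)=(Tf_i)_{i=1}^{n-1}$ and observes it is inverse to $\gamma$, leaving the verifications you spell out as "clear." Your extra checks of well-definedness and of both composites are sound and fill in precisely what the paper omits.
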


\begin{proof}
	The map $\delta:A_{n,d}\rightarrow Q_{n,d}$ sending $(U,T)$ to $\{Tf_i\}_{i=1}^{n-1}$ is 
	clearly inverse to $\gamma$.
\end{proof}

To count $Q_{n,d}$, it therefore suffices to count $A_{n,d}$.  To do this, 
we note that for any $d$-dimensional $k$-vector space $U$, choosing a basis of $U$
gives a bijection between the set 
$\Hom_{k}^{\surj}(k^{n-1},U)$ and the set of $(n-1)\times d$ matrices over $k$ with rank 
$d$, and we deduce that
\begin{equation}
		\#\Hom_{k}^{\surj}(k^{n-1},U) = \prod_{i=0}^{d-1} (q^{n-1} - q^i)
		\label{Homcount}
\end{equation}
for any such $U$ (note, in particular, that this is independent of $U$).  As the count
\begin{equation}
	\#\Gr(W,d) = \frac{\prod\limits_{i=0}^{d-1}(q^n - q^i)}{\prod\limits_{i=0}^{d-1} (q^d - q^i)}
	\label{Grcount}
\end{equation}
is standard, we conclude from (\ref{Grcount}), (\ref{Homcount}) and Lemma \ref{gammabij}
that 
\begin{equation}
	\#Q_{n,d} = \#\Gr(W,d)\cdot \#\Hom_{k}^{\surj}(k^{n-1},U) = 
	\frac{\prod\limits_{i=0}^{d-1}(q^n - q^i)}{\prod\limits_{i=0}^{d-1} (q^d - q^i)}
	\prod_{i=0}^{d-1} (q^{n-1} - q^i)\label{Qcount}.
\end{equation}

For each $w\in W=V/V_{\infty}$, there are $\# V_{\infty} = q^s$ ways to lift $w$ to a vector
in $V$, and hence $q^{sn}=q^{s(g-s)}$ ways to lift any list of $n=g-s$ vectors in $W$ to 
$V$.  Thus, by (\ref{Qcount}), the number of choices for the first $g-s$ vectors $(x_i)_{i=1}^{g-s}$
is
\begin{equation}
	q^{s(g-s)}\frac{\prod\limits_{i=0}^{r-s-1}(q^{g-s} - q^i)}{\prod\limits_{i=0}^{r-s-1} (q^{r-s} - q^i)}
	\prod_{i=0}^{r-s-1} (q^{g-s-1} - q^i)\label{Firstncount}.
\end{equation}
Combining (\ref{Firstncount}) and (\ref{Lastscount}) and using Lemma \ref{mubij} then gives
\begin{equation}
	\#P_{r,s}^{\tau} = \#X_{r,s} = 
	q^{s(g-s)}\frac{\prod\limits_{i=0}^{r-s-1}(q^{g-s} - q^i)}{\prod\limits_{i=0}^{r-s-1} (q^{r-s} - q^i)}
	\prod_{i=0}^{r-s-1} (q^{g-s-1} - q^i) \prod_{i=0}^{s-1} (q^g - q^i),
\end{equation}
which, after some elementary algebraic manipulation, is readily seen to be equivalent to 
the formula of Theorem \ref{main}.


\bibliography{cseofqbib}

\end{document}